\documentclass{amsart}
\usepackage{amscd,amsfonts,amssymb,amsmath}
\usepackage[margin=3.9cm]{geometry}

\usepackage{graphics}
\usepackage{epsfig}
\usepackage{amsmath}

\newtheorem{theorem}{Theorem}[section]

\theoremstyle{definition}
\newtheorem{definition}[theorem]{Definition}

\newtheorem{prop}[theorem]{Proposition}
\theoremstyle{remark}
\newtheorem{remark}[theorem]{Remark}
\numberwithin{equation}{subsection}
\theoremstyle{plain}

\newtheorem{question}{Question}

\numberwithin{equation}{section}

\begin{document}

\title[MONOID AND GROUP OF PSEUDO BRAIDS]{MONOID AND GROUP OF PSEUDO BRAIDS}

\author{Valeriy ~G.~Bardakov}
\address{Sobolev Institute of Mathematics, Novosibirsk State University, Novosibirsk 630090, Russia
and Laboratory of Quantum Topology, Chelyabinsk State University, Brat'ev Kashirinykh street 129, Chelyabinsk 454001, Russia}
\email{bardakov@math.nsc.ru}
\author{Slavik Jablan}
\address{The Mathematical Institute, Belgrade, 11000, Serbia}
\email{sjablan@gmail.com}
\author{Hang Wang}
\address{School of Mathematical Sciences, University of Adelaide, Adelaide 5005, Australia}
\email{hang.wang01@adelaide.edu.au}

\subjclass[2000]{Primary 20F18; Secondary 20D15,20E05}
\keywords{the theory of pseudo knots, pseudo braids, singular knots, Markov's Theorem}

\thanks{\bf The first author  is partially supported by Laboratory of Quantum Topology of Chelyabinsk State University (Russian Federation government grant 14.Z50.31.0020)}

\date{\today}


\begin{abstract}
In the present paper we define a monoid of pseudo braids and prove that this monoid is isomorphic to a singular
braid monoid.  Also we prove an analogue of Markov's Theorem for pseudo braids.
\end{abstract}
\maketitle

\section{Introduction}

In 2010 Ryo Hanaki \cite{H} introduced the notion of a pseudo diagram of a knot, link and spatial graph. A pseudo knot diagram is similar to the projection of a knot in $\mathbb{R}^2$, but besides over-crossings and under-crossings we allow unspecified crossings (which could be either over or under crossings). A double point with over/under information and a double point without over/under information are called a {\it crossing} and a {\it pre-crossing}, respectively. The notion of a pseudo diagram is a generalization of a knot or link diagram. The observation of DNA knots was an opportunity of this research, namely we can not determine over/under information at some of the crossings in some photos of DNA knots. DNA knots barely become visual objects by examining the protein-coated one by electromicroscope. However there are still cases in which it is hard to confirm the over/under information of some of the crossings. If we know the (non-)triviality of knot without checking every over/under information of crossings, then it may give a reasonable way to detect the (non-)triviality of DNA knot.

The notion of a pseudo knot was subsequently introduced in \cite{J} and defined as an equivalence class of pseudo diagrams under an appropriate choice of Reidemeister moves. In order to classify pseudo knots the
authors of \cite{J} introduced the concept of a weighted resolution set, an invariant of pseudo knots, and computed the
weighted resolution set for several pseudo knots families and discussed extensions of crossing number, homotopy,
and chirality for pseudo knots. The aim of this note is to address the following question formulated in their paper.

\begin{question} (\cite[Question 3]{J}) What is an appropriate definition of pseudo braids? In particular, when
are two pseudo braids equivalent? Furthermore, in classical braid theory there are Markov moves that characterize
when two braids have equivalent closure. Is there an analog for pseudo braids?
\end{question}

Pseudo knots are closely related to singular knots.
There is a map $f$ from the set of singular knot diagrams to the set of pseudo knot diagrams by replacing singular crossings by pseudo crossings. The map $f$ is bijective because for every pseudo knot diagram, we can find a singular knot diagram where the singular crossings replace the pseudo crossings and vice versa. Modulo equivalence relation defining pseudo or singular knots, the map $f$ induces an onto map, denoted $F$, from singular knots/links to pseudo knots/links because the image of two isotopic singular knot diagrams are also isotopic pseudo knot diagrams with exactly the same sequence of Reidemeister moves (where the singular crossings are replaced by pseudo crossings):
\begin{equation}
\label{eq:F}
F: S\mathcal{L}\rightarrow P\mathcal{L}.
\end{equation}
Respectively, $S\mathcal{L}, P\mathcal{L}$ denote the set of singular links and that of pseudo links.
But note that this map $F$ is not one to one. The key idea for this note is to present a pseudo knot $K$ using a singular knot in the pre image $F^{-1}(K)$ and using the existing formulation of singular knots to deduce our result for pseudo knots.

\section{Monoid of Pseudo Braids}

To properly define the monoid of pseudo braids, we recall the definition of a singular monoid.

\begin{definition}[Baez-Birman \cite{Baez, B1}]
The Baez-Birman monoid or the singular braid monoid $SM_n$ is generated (as a monoid) by elements
$\sigma_i$, $\sigma_i^{-1}$, $\tau_i$, $i = 1, 2, \ldots, n-1$ subject to the following relations:
the relations for the elements $\sigma_i$, $\sigma_i^{-1}$
generating the braid group $B_n$; defining relations for the generators $\tau_i$:
 \begin{equation}\label{11}
\tau_i \tau_j = \tau_j \tau_i,~~~|i - j| \geq 2,
\end{equation}
and the other mixed relations:
 \begin{equation}\label{22}
\tau_i \sigma_j^{\pm 1} = \sigma_j^{\pm 1} \tau_i,~~~|i - j| \geq 2,
\end{equation}
 \begin{equation}\label{33}
\tau_i \sigma_i^{\pm 1} = \sigma_i^{\pm 1} \tau_i,~~~i = 1, 2, \ldots, n-1,
\end{equation}
 \begin{equation}\label{44}
\sigma_i \sigma_{i+1} \tau_i = \tau_{i+1} \sigma_i \sigma_{i+1},
\end{equation}
 \begin{equation}\label{55}
\sigma_{i+1} \sigma_{i} \tau_{i+1} = \tau_{i} \sigma_{i+1} \sigma_{i}.
\end{equation}
\end{definition}
Fenn-Keyman-Rourke \cite{FKR} proved that the singular braid monoid $SM_n$ is embedded into the group $SB_n$ which
is called {\it the singular braid group} and has the same defining relations as $SM_n$.

The generators $\tau_i$, $i = 1, 2, \ldots, n-1$ have a geometric interpretation (see Figure 1).
\begin{center}
\includegraphics[width=8.0cm]{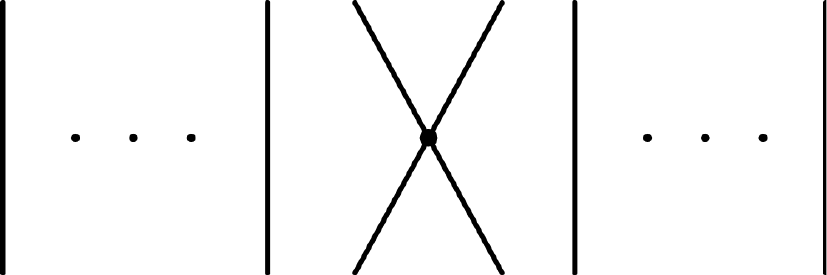}\\
{Figure 1: The generator $\tau_i$}
\end{center}
where the $i$-th and $i+1$-th strings intersect.

To define a monoid of pseudo braid $PM_n$ we take the generators $\sigma^{\pm 1}_1, \sigma_2^{\pm 1}, \ldots, \sigma^{\pm 1}_{n-1}$ of the braid group $B_n$ and add the generators $p_1, p_2, \ldots, p_{n-1}$ which are similar to the generators $\tau_1, \tau_2, \ldots, \tau_{n-1}$ and they denote pre-crossings instead of singular crossings. To find defining relations we make use of pseudo-Reidemeister moves for the pseudo links (see Figure 2 in \cite{J}). The Reidemeister moves R2 and R3 correspond to relations
 \begin{equation}\label{222}
\sigma_i \sigma_i^{-1} = \sigma_i^{-1} \sigma_i,
\end{equation}
 \begin{equation}\label{555}
\sigma_{i+1} \sigma_{i} \sigma_{i+1} = \sigma_{i} \sigma_{i+1} \sigma_{i}.
\end{equation}
Note that these are relations in $B_n$.

The move PR2 (see Figure 2) corresponds to relations
 \begin{equation}\label{33}
\tau_i \sigma_i^{\pm 1} = \sigma_i^{\pm 1} \tau_i.
\end{equation}

The moves PR3 and RP3' (see Figure 3 and Figure 4) correspond to relations
 \begin{equation}\label{44}
\sigma_i \sigma_{i+1} \tau_i = \tau_{i+1} \sigma_i \sigma_{i+1},
\end{equation}
 \begin{equation}\label{55}
\sigma_{i+1} \sigma_{i} \tau_{i+1} = \tau_{i} \sigma_{i+1} \sigma_{i}.
\end{equation}

The first Reidemeister moves R1 and PR1 do not hold in $PM_n$. Hence, we can formulate:

\begin{definition}[Monoid and group of pseudo braids]
The {\it monoid of pseudo braids} $PM_n$ is a monoid generated by
$\sigma_i$, $\sigma_i^{-1}$, $p_i$, $i = 1, 2, \ldots, n-1$, where the elements $\sigma_i^{\pm 1}$ generate
the braid group $B_n$ and generators $p_i$ satisfy the defining relations~
\begin{equation}\label{1}
p_i p_j = p_j p_i,~~~|i - j| \geq 2,
\end{equation}
 and other mixed relations
  \begin{equation}\label{2}
p_i \sigma_j^{\pm 1} = \sigma_j^{\pm 1} p_i,~~~|i - j| \geq 2,
\end{equation}
 \begin{equation}\label{3}
p_i \sigma_i^{\pm 1} = \sigma_i^{\pm 1} p_i,~~~i = 1, 2, \ldots, n-1,
\end{equation}
 \begin{equation}\label{4}
\sigma_i \sigma_{i+1} p_i = p_{i+1} \sigma_i \sigma_{i+1},
\end{equation}
 \begin{equation}\label{5}
\sigma_{i+1} \sigma_{i} p_{i+1} = p_{i} \sigma_{i+1} \sigma_{i}.
\end{equation}

The {\it group of pseudo braids} $PB_n$ is a group  generated by
$\sigma_i$,  $p_i$, $i = 1, 2, \ldots, n-1$ and defined by the same defining relations as $PM_n$.
\end{definition}

\begin{center}
\includegraphics[width=5.0cm]{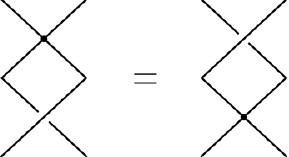}\\
{Figure 2: The Reidemeister move RP2}
\end{center}
\begin{center}
\includegraphics[width=10.0cm]{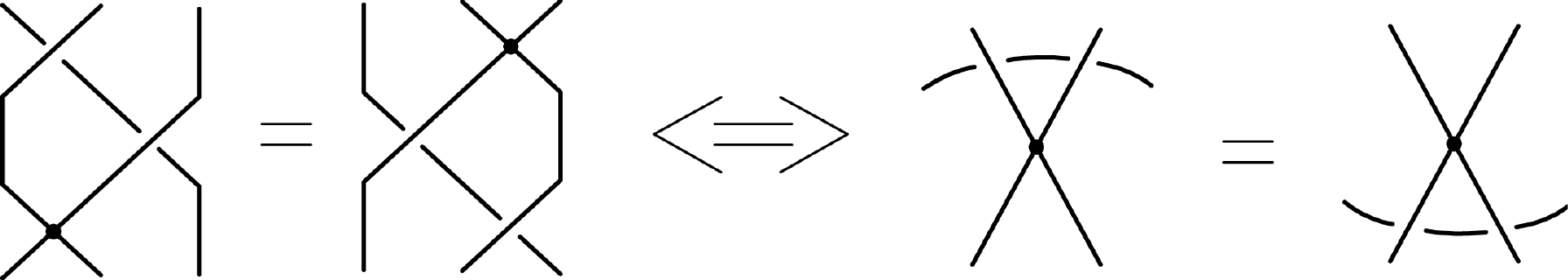}\\
{Figure 3: The Reidemeister move RP3}
\end{center}
\begin{center}
\includegraphics[width=10.0cm]{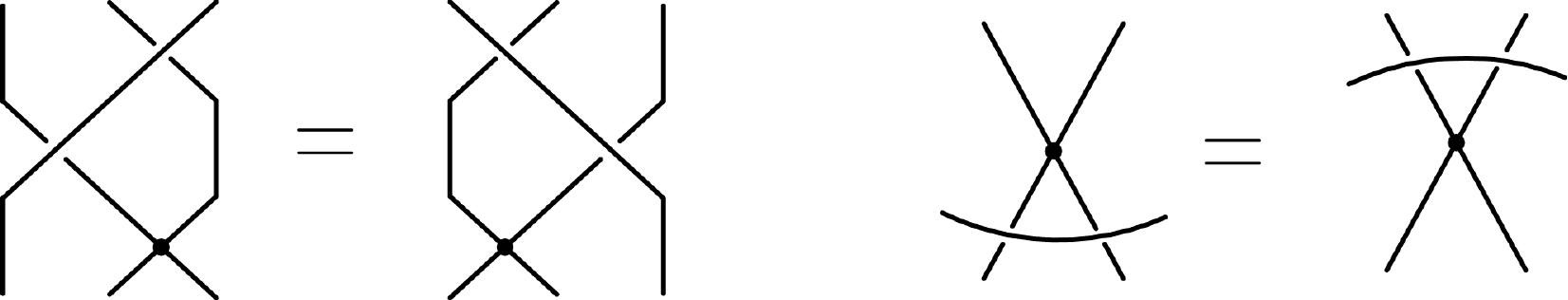}\\
{Figure 4: The Reidemeister move RP3'}
\end{center}

Comparing the defining relations for $SM_n$ and $PM_n$ we see that they are isomorphic and the isomorphism $SM_n \longrightarrow PM_n$ is defined by the rule $\sigma_i \mapsto \sigma_i$ and $\tau_i \mapsto p_i$ for all $i = 1, 2, \ldots, n-1$. On the other side,
in \cite{FKR} it was proved that  $SM_n$ is embedded into the the group $SB_n$ which is called the {\it singular braid group} and has the same defining relations as $SM_n$. Hence, we have:

\begin{prop}\label{con}
The monoid of pseudo braids $PM_n$ is isomorphic to the singular braid monoid $SM_n$ and the group of pseudo braids $PB_n$ is isomorphic to the group of singular braids $SB_n$ for all $n \geq 2$.
\end{prop}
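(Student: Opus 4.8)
The plan is to prove both isomorphisms purely formally from the presentations, invoking the universal property of a presented monoid (respectively group): a homomorphism out of $\langle X \mid R\rangle$ into a monoid $M$ exists and is unique for any set-map $X \to M$ under which every relation in $R$ becomes an identity in $M$. First I would define a monoid homomorphism $\phi \colon SM_n \to PM_n$ on generators by $\sigma_i \mapsto \sigma_i$, $\sigma_i^{-1} \mapsto \sigma_i^{-1}$, and $\tau_i \mapsto p_i$ for all $i = 1, \ldots, n-1$.

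To see that $\phi$ is well defined I would check that each defining relation of $SM_n$ is sent to a relation holding in $PM_n$. The braid-group relations among the $\sigma_i^{\pm 1}$ are literally the same in both monoids, so they are preserved. The relation $\tau_i\tau_j = \tau_j\tau_i$ (for $|i-j|\ge 2$) maps to $p_ip_j = p_jp_i$; the mixed relations $\tau_i\sigma_j^{\pm1} = \sigma_j^{\pm1}\tau_i$ (for $|i-j|\ge 2$) and $\tau_i\sigma_i^{\pm1} = \sigma_i^{\pm1}\tau_i$ map to the corresponding commutation relations for the $p_i$; and the two braid-type relations $\sigma_i\sigma_{i+1}\tau_i = \tau_{i+1}\sigma_i\sigma_{i+1}$ and $\sigma_{i+1}\sigma_i\tau_{i+1} = \tau_i\sigma_{i+1}\sigma_i$ map to the two analogous relations defining $PM_n$. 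Thus every defining relation of $SM_n$ becomes a defining relation of $PM_n$, so $\phi$ is a bona fide monoid homomorphism.

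Symmetrically, I would define $\psi \colon PM_n \to SM_n$ by $p_i \mapsto \tau_i$ (and $\sigma_i^{\pm1}\mapsto\sigma_i^{\pm1}$); the same relation-by-relation inspection, read in reverse, shows $\psi$ is well defined. Since both maps fix the generators, the composites $\psi\phi$ and $\phi\psi$ act as the identity on generators and hence equal the respective identity homomorphisms, so $\phi$ is an isomorphism $SM_n \cong PM_n$. The group statement is identical in structure: $SB_n$ and $PB_n$ are defined by exactly these presentations read as group presentations, so the same two assignments—now invoking the universal property for groups—furnish $SB_n\cong PB_n$. Moreover these isomorphisms are compatible with the natural maps $SM_n\to SB_n$ and $PM_n\to PB_n$, so the Fenn--Keyman--Rourke embedding $SM_n\hookrightarrow SB_n$ transports to an embedding $PM_n\hookrightarrow PB_n$.

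I anticipate essentially no genuine obstacle: the mathematical content sits upstream, in the earlier derivation of the relations for $PM_n$ from the pseudo-Reidemeister moves, after which the comparison of presentations is bookkeeping. The only points requiring care are to treat $\sigma_i^{-1}$ as a formal generator (rather than a derived inverse) in the monoid setting, so that $\phi$ and $\psi$ are manifestly monoid homomorphisms, and to confirm that the matching of relations is complete in \emph{both} directions—that neither presentation carries a relation lacking a counterpart in the other. Once that completeness is verified, the isomorphism is immediate.
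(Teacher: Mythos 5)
Your proposal is correct and is essentially the paper's own argument made explicit: the paper simply observes that the defining relations of $SM_n$ and $PM_n$ coincide under $\sigma_i \mapsto \sigma_i$, $\tau_i \mapsto p_i$, and you have carried out the relation-by-relation verification and the universal-property bookkeeping that this observation implicitly relies on. The same identification of presentations handles the group case $SB_n \cong PB_n$, exactly as in the paper.
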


\begin{remark}
Comparing the Reidemeister moves for the singular links and for pseudo links, we see that the difference is the first Reidemeister move PR1 (see Figure 2 in \cite{J}) which does not hold for singular links but holds for pseudo links. Hence, the Theory of pseudo links is the quotient of the Theory of singular links by the first singular Reidemeister move.
\end{remark}

Since the monoid of pseudo braids $PM_n$ is isomorphic to the singular braid monoid $SM_n$, we can reformulate properties of $SM_n$ for $PM_n$.  There are some different ways to solve the word problem for the singular braid monoid and for the singular braid group. The singular braid monoid on two strings is isomorphic to $\mathbb{Z} \times \mathbb{Z}^+$, so the word problem in this case is trivial. In the general case a solution of the word problem
in $SM_n$ was done by R. Corran \cite{Cor}. V. Vershinin \cite{V} generalized Garside's results and constructed the greedy normal form for $SM_n$. We don't know if it is possible to generalize this result to the group $SB_n$.
L.~Paris \cite{P}
proved that $SM_n$ is a semi-direct product of a right-angled Artin monoid and $B_n$. This gives a solution to the word problem for $SM_n$ and $SB_n$.

The desingularization map is the multiplicative homomorphism $\eta : SM_n \longrightarrow \mathbb{Z}[B_n]$ defined by $\eta(\sigma_i^{\pm 1}) = \sigma_i^{\pm 1}$ and $\eta(\tau_i) = \sigma_i - \sigma_i^{-1}$ for $1 \leq i \leq n - 1$.
This homomorphism is one of the main ingredients of the definition of Vasiliev invariants for braids (see \cite{B1}).
L.~Paris \cite{P}
proved that the desingularization map $\eta$ is an embedding of $SM_n$ into the group algebra $\mathbb{C}[B_n]$.
It provides an answer to the question of J.~Birman \cite{B1}.
Hence, the homomorphism $\eta$ gives other solution of the word problem in $SM_n$.

To solve the word problem in the groups
$SB_n$ and hence in $PB_n$ we need to define an inverse to $\tau_i$. Take the group algebra $\mathbb{C}[[B_n]]$ of
formal power series. If an element $a \in \mathbb{C}[[B_n]]$ has the inverse $a^{-1} \in \mathbb{C}[[B_n]]$,
then
$$
(a - a^{-1})^{-1} = a (a^2 - 1)^{-1}.
$$
Since
$$
(a^2 - 1)^{-1} = -(1 + a^2 + a^4 + \ldots),
$$
we have proved the following proposition.

\begin{prop}
{\it There is a homomorphism $\widetilde{\eta} : SB_n \longrightarrow \mathbb{C}[[B_n]]$ which is defined on the generators
$$
\tau_i \mapsto \sigma_i - \sigma_i^{-1},~~~\sigma_i \mapsto \sigma_i,
$$
and is an extension of the homomorphism $\eta$. Under this homomorphism we have}
$$
\tau_i^{-1} \mapsto (\sigma_i - \sigma_i^{-1})^{-1} = - \sigma_i (1 + \sigma_i^2 + \sigma_i^4 + \ldots).
$$
\end{prop}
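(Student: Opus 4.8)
The plan is to build $\widetilde\eta$ through the universal property of the presentation of $SB_n$. Since $SB_n$ is the group on generators $\sigma_i,\tau_i$ subject to the same defining relations as $SM_n$, it suffices to send each generator to a \emph{unit} of $\mathbb{C}[[B_n]]$ in such a way that these relations hold; the group axioms $\tau_i\tau_i^{-1}=1$ are then automatic. I would take $\sigma_i\mapsto\sigma_i$ and $\tau_i\mapsto\sigma_i-\sigma_i^{-1}$, precisely the assignment defining $\eta$. The key observation is that none of the defining relations of $SM_n$ involves $\tau_i^{-1}$: each is a word identity in the $\sigma_i^{\pm1}$ and the $\tau_i$ alone. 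Hence, under this assignment, every relation becomes exactly the identity in $\mathbb{Z}[B_n]\subset\mathbb{C}[[B_n]]$ already verified by the monoid homomorphism $\eta\colon SM_n\to\mathbb{Z}[B_n]$. So the relations impose nothing new, and the whole content of the proposition reduces to showing that the image $\sigma_i-\sigma_i^{-1}$ of $\tau_i$ is a unit in $\mathbb{C}[[B_n]]$.

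To establish this I would first pin down $\mathbb{C}[[B_n]]$. I grade the group algebra $\mathbb{C}[B_n]$ by the exponent-sum homomorphism $B_n\to\mathbb{Z}$, so that $\sigma_i$ has degree $+1$ and $\sigma_i^{-1}$ degree $-1$, and I let $\mathbb{C}[[B_n]]$ be the algebra of formal sums $\sum_d x_d$ in which each $x_d$ is a finitely supported homogeneous element of degree $d$ and $x_d=0$ for all sufficiently negative $d$. In each total degree the convolution product then collects only finitely many contributions, so this is an associative algebra containing $\mathbb{C}[B_n]$, and the series $1+\sigma_i^2+\sigma_i^4+\cdots$ (one term in each even nonnegative degree) is a bona fide element of it. Writing $\sigma_i-\sigma_i^{-1}=\sigma_i^{-1}(\sigma_i^2-1)$ and using the telescoping identity
\[
(\sigma_i^2-1)\,(1+\sigma_i^2+\sigma_i^4+\cdots)=-1,
\]
I obtain $(\sigma_i^2-1)^{-1}=-(1+\sigma_i^2+\sigma_i^4+\cdots)$ and therefore
\[
(\sigma_i-\sigma_i^{-1})^{-1}=\sigma_i(\sigma_i^2-1)^{-1}=-\sigma_i(1+\sigma_i^2+\sigma_i^4+\cdots),
\]
which is exactly the claimed formula for the image of $\tau_i^{-1}$.

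Finally, since every generator of $SB_n$ now maps to a unit and all defining relations are respected, the universal property yields a group homomorphism $\widetilde\eta\colon SB_n\to\mathbb{C}[[B_n]]$. It restricts to $\eta$ on the submonoid $SM_n$---which embeds in $SB_n$ by Fenn--Keyman--Rourke \cite{FKR}---because the two maps agree on the generators $\sigma_i$ and $\tau_i$; thus $\widetilde\eta$ is the desired extension of $\eta$, and the formula for $\tau_i^{-1}$ is the one computed above. I expect the only genuinely delicate point to be the choice of $\mathbb{C}[[B_n]]$: because $\sigma_i$ is a unit and not a topologically nilpotent element, the usual augmentation-ideal completion would not make the geometric series $1+\sigma_i^2+\sigma_i^4+\cdots$ converge. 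The grading by exponent sum repairs this by giving $1-\sigma_i^2$ an invertible lowest-degree term; once bounded-below formal sums are declared to multiply degree-by-degree, the remaining verifications are the routine computation displayed above.
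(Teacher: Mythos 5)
Your proposal is correct, and its computational heart---writing $\sigma_i-\sigma_i^{-1}=\sigma_i^{-1}(\sigma_i^2-1)$ and inverting $\sigma_i^2-1$ by the geometric series---is exactly the paper's argument: the paper states the identity $(a-a^{-1})^{-1}=a(a^2-1)^{-1}$ for invertible $a$, inverts $a^2-1$ as $-(1+a^2+a^4+\ldots)$, and declares the proposition proved. Where you go beyond the paper is in the two points it leaves implicit. First, the paper never explains why the assignment on generators yields a well-defined group homomorphism; your observation that every defining relation of $SB_n$ is a word identity in $\sigma_i^{\pm 1}$ and $\tau_i$ alone (no $\tau_i^{-1}$ occurs), so that the relations are already verified by the monoid homomorphism $\eta$ and the universal property of the presentation applies once all generator images are units, is the correct and needed justification. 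Second, and more substantively, the paper never actually defines $\mathbb{C}[[B_n]]$---``group algebra of formal power series'' is not a standard object when all group elements are invertible---and your remark that the augmentation-ideal completion would \emph{not} make $1+\sigma_i^2+\sigma_i^4+\cdots$ converge is accurate (each $\sigma_i^{2k}\equiv 1$ modulo the augmentation ideal, so partial sums never stabilize). Your bounded-below completion with respect to the exponent-sum grading, in which products are computed degree by degree with only finitely many contributions in each degree, is a genuine repair of this gap and is what makes the telescoping identity $(\sigma_i^2-1)(1+\sigma_i^2+\sigma_i^4+\cdots)=-1$ a legitimate computation rather than a formal manipulation. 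In short: same key formula as the paper, but your write-up supplies both the existence argument for $\widetilde{\eta}$ and a working definition of the target algebra, both of which the paper's proof takes for granted.
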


\begin{question}
Is it true that the homomorphism $\widetilde{\eta} : SB_n \longrightarrow \mathbb{Z}[[B_n]]$  is an embedding?
\end{question}

If it is true, then we will have a solution of the word problem for the groups
$SB_n$ and $PB_n$.

\section{Geometric Interpretation of Pseudo Braids and Alexander's Theorem}

The geometric interpretation for the generators $p_i$, $i = 1, 2, \ldots, n-1$ is Figure 1 where the singular crossing is replaced by a pre-crossing. Any
two geometric pseudo braids for the same $n$ can be composed into their product (See Figure 5). 
\begin{center}
\includegraphics[width=10.0cm]{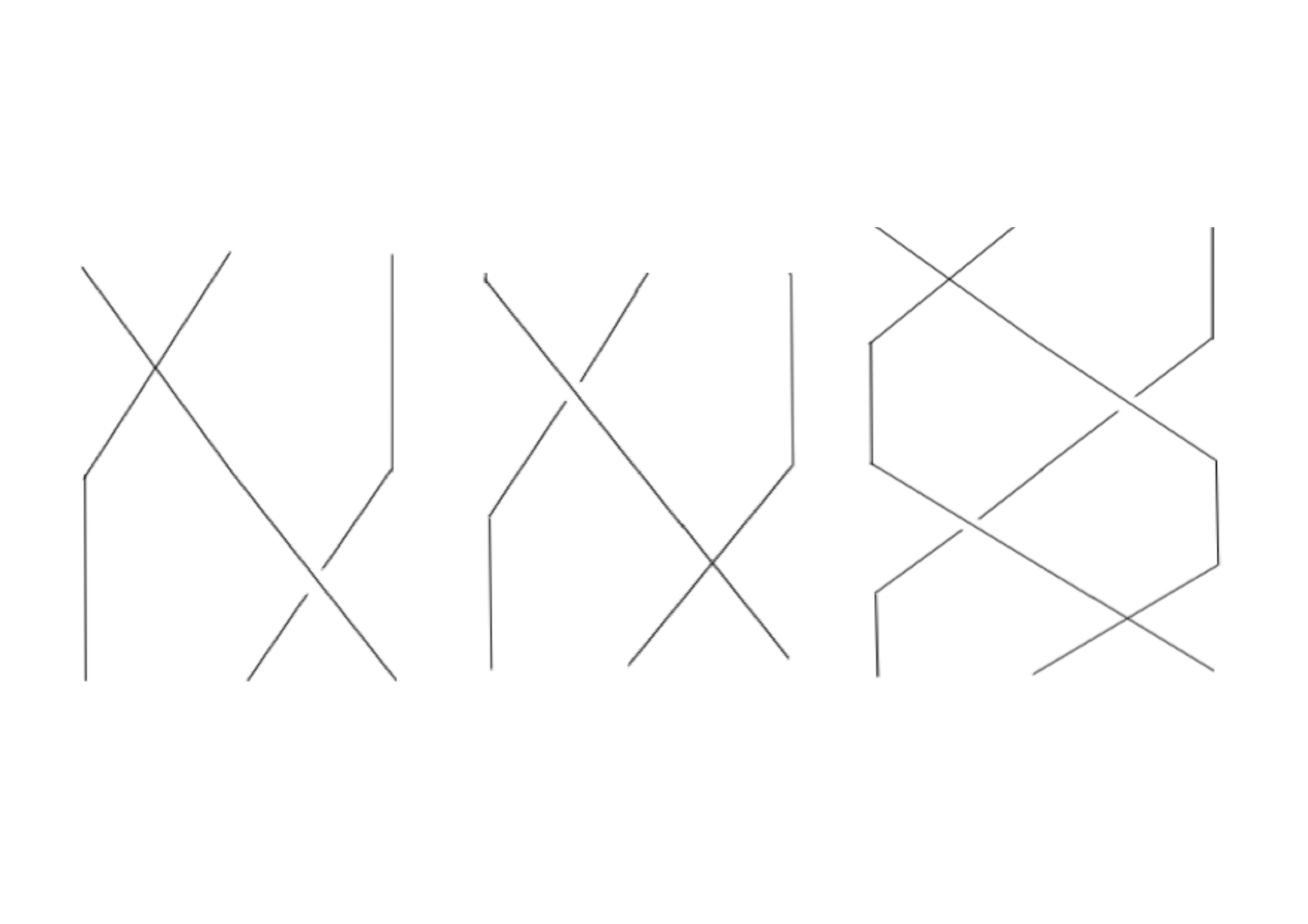}\\
{Figure 5: Product of two pseudo braids}
\end{center}
Every pseudo braid corresponds a pseudo knot or link by taking its closure (see Figure 6).
\begin{center}
\includegraphics[width=9.0cm]{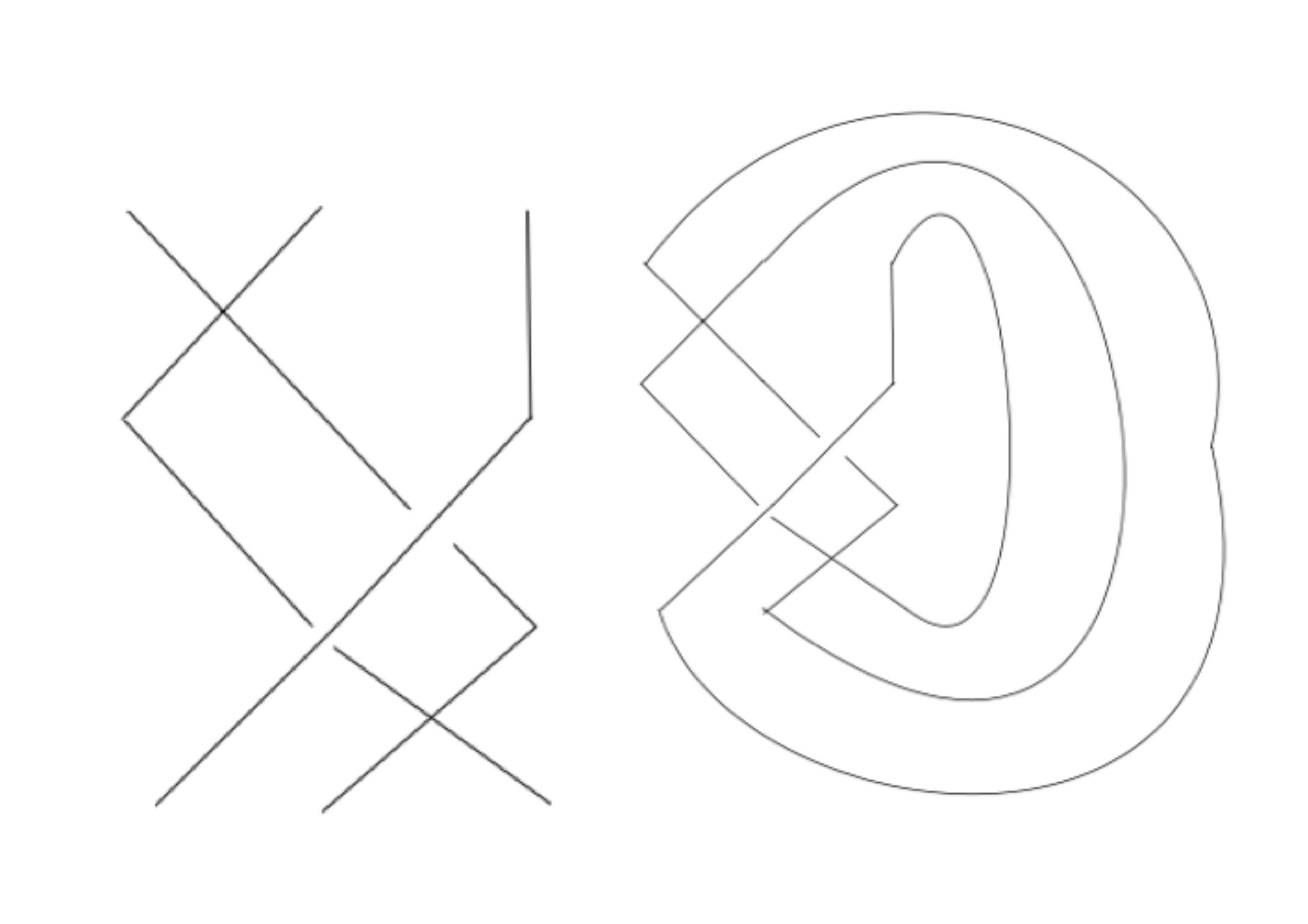}\\
{Figure 6: A pseudo braid and its closure}
\end{center}
Denote the closure of a pseudo braid $\beta$ as $\widehat{\beta}$. This closure operation gives rise to a map from the set of
pseudo braids to the set of pseudo links
$$
\widehat{~} : PM \longrightarrow P\mathcal{L},
$$
where $PM = \bigcup_{n=2}^{\infty} PM_n$, $P\mathcal{L}$ is the set of pseudo links.

We will represent pseudo links
as spatial graphs in Euclidean 3-space $\mathbb{R}^3$. This is in contrast with the definition of virtual links.
Virtual links are combinatorial objects but pseudo links are topological objects which are similar to the
singular links.

Birman \cite{B1} proved Alexander's Theorem for the singular links. This result allows us to prove the following analogue of Alexander' Theorem for pseudo links.

\begin{theorem}\label{al}
Let $L$ be a pseudo link.  Then there exists a pseudo braid $\beta \in PM_n$ for some $n$ such that the
closure $\widehat{\beta}$ is equivalent to $L$.
\end{theorem}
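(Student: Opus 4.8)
The plan is to reduce the statement to the corresponding theorem for singular links, which Birman has already established, by exploiting the surjection $F$ of \eqnref{eq:F} together with the isomorphism $PM_n \cong SM_n$ recorded in \propref{con}. The guiding principle is that pre-crossings and singular crossings differ only in the bookkeeping they carry, and that all the operations we need --- closure, braiding, and the move that converts a diagram to a braid --- are local and hence commute with the replacement of one kind of crossing by the other.

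First I would lift the given pseudo link $L$ to the singular world. Since $F : S\mathcal{L} \to P\mathcal{L}$ is onto, I may choose a singular link $L_s$ with $F(L_s) = L$; concretely, I fix a diagram of $L$ and replace each of its pre-crossings by a singular crossing, leaving the ordinary over/under crossings untouched. Next I would apply Birman's Alexander Theorem for singular links to $L_s$: there is an integer $n$ and a singular braid $\beta_s \in SM_n$ whose closure $\widehat{\beta_s}$ is equivalent, as a singular link, to $L_s$. I then transport $\beta_s$ to a pseudo braid via the isomorphism $\varphi : SM_n \to PM_n$, $\sigma_i \mapsto \sigma_i$, $\tau_i \mapsto p_i$, of \propref{con}, and set $\beta = \varphi(\beta_s) \in PM_n$.

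The crux of the argument is the commutativity of closure with the passage from singular to pseudo diagrams. Both the closure operation and the map $F$ --- the latter being nothing but the local replacement of singular crossings by pre-crossings --- are defined by diagrammatic modifications that do not interfere with one another, since the closure arcs meet no crossings. Consequently the pre-crossings of $\widehat{\beta}$ are exactly the images of the singular crossings of $\widehat{\beta_s}$, and therefore
\[
\widehat{\beta} = F(\widehat{\beta_s}).
\]
Finally, since $\widehat{\beta_s}$ and $L_s$ are equivalent singular links, and $F$ carries a sequence of singular Reidemeister moves to the same sequence of pseudo-Reidemeister moves --- as noted in the Introduction --- their images under $F$ are equivalent pseudo links. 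Hence
\[
\widehat{\beta} = F(\widehat{\beta_s}) \sim F(L_s) = L,
\]
which is the desired conclusion.

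The main obstacle I anticipate is not any of the three named ingredients but the careful verification that $F$ descends correctly to the braid-and-closure picture: that is, making precise both that the replacement map commutes with closure and that it respects link equivalence at this level. This amounts to observing that the only Reidemeister move separating the singular and pseudo theories is PR1, which never arises along the closure arcs of a braid, so no information is lost or created when passing from $\widehat{\beta_s}$ to $\widehat{\beta}$. I would establish this by tracking crossings locally, strand by strand, rather than by appealing to the global structure of the two theories.
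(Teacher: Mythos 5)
Your proposal follows essentially the same route as the paper's own proof: lift $L$ through the surjection $F$ to a singular link, invoke Birman's Alexander theorem to braid it, and carry the resulting singular braid back to a pseudo braid via the isomorphism of Proposition~\ref{con}. In fact your write-up is slightly more careful than the paper's, since you explicitly verify that closure commutes with $F$ and that $F$ preserves equivalence, distinguishing equality of diagrams from equivalence of links where the paper simply asserts that the closure of $\beta$ ``is exactly'' $L$.
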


\begin{proof}
As there is a natural map $F$ from singular links onto pseudo links (see~(\ref{eq:F})), let $L'$ be a singular link in $F^{-1}(L).$ From~\cite{B1} we know that there is a singular braid $\beta'$ such that the closure of $\beta'$ is the singular link $L'$. From Proposition~\ref{con} there is a unique pseudo braid $\beta$ corresponding to $\beta'$ under the monoid isomorphism and the closure of $\beta$ is exactly the pseudo link $L.$
\end{proof}

\section{Markov's Theorem}

Consider the set of monoids of pseudo braids $PM_n$ for $n = 2, 3, \ldots$ and let
$PM = \bigcup_{n=2}^{\infty} PM_n$.
Define the following Markov's moves on the set $PM$:\\

M1. If $\beta \in PM_n$ and $a \in B_n$ then
$$
\beta \leftrightarrow a^{-1} \beta a.
$$\\

M2. If $\beta = \beta_1 \beta_2 \in PM_n$  then
$$
\beta \leftrightarrow \beta_2 \beta_1.
$$\\

M3. If $\beta \in PM_n$  then
$$
\beta \leftrightarrow  \beta \sigma^{\pm 1}_n \in PM_{n+1}.
$$\\

M4. If $\beta \in PM_n$  then
$$
\beta \leftrightarrow  \beta p_n \in PM_{n+1}.
$$

\begin{theorem}\label{mainth}
Let $L$ and $L'$ be two pseudo link diagrams. Suppose that $L = \widehat{\beta}$ and $L' = \widehat{\beta'}$ for
some pseudo braids $\beta \in PM_n$ and $\beta' \in PM_m$. Then the pseudo links $L$ and $L'$ are equivalent if
and only if there is a finite sequence of Markov's moves, which transform $\beta$ to $\beta'$.
\end{theorem}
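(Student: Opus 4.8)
The plan is to derive the statement from the Markov theorem for singular braids, transported through the monoid isomorphism of Proposition~\ref{con} and the quotient description of pseudo links in the Remark above. The isomorphism $\phi : SM_n \to PM_n$, $\sigma_i \mapsto \sigma_i$, $\tau_i \mapsto p_i$, is compatible with closures: since $\phi$ only relabels singular crossings as pre-crossings, one checks that $\widehat{\phi(\beta')}$ equals the pseudo link $F(\widehat{\beta'})$ obtained by applying the surjection $F$ of~\eqref{eq:F} to the singular closure. Thus closure, $F$ and $\phi$ form a commuting square, so it suffices to establish the braid equivalence on the singular side and push it through $\phi$.

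First I would dispose of the (easy) sufficiency direction, that each move M1--M4 preserves the pseudo link type of the closure. The moves M1 (conjugation by an element of $B_n$), M2 (the cyclic exchange $\beta_1\beta_2 \leftrightarrow \beta_2\beta_1$, needed as a separate move precisely because the generators $p_i$ are not invertible) and M3 (stabilization by $\sigma_n^{\pm1}$) already preserve the \emph{singular} link type of the closure by the singular Markov theorem, hence a fortiori the pseudo link type. The move M4 is the new ingredient and is exactly where PR1 enters: closing $\beta p_n \in PM_{n+1}$ creates a small loop on the new strand carrying a single pre-crossing, and PR1---which holds for pseudo links although not for singular links---removes this loop to recover $\widehat\beta$. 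Hence $\widehat{\beta p_n}$ and $\widehat\beta$ are equivalent pseudo links, and all four moves are legitimate.

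For necessity I would argue as follows. Assume $\widehat\beta \sim_P \widehat{\beta'}$ and lift through $\phi^{-1}$ to singular braids $\beta_s = \phi^{-1}(\beta)$, $\beta'_s = \phi^{-1}(\beta')$, so that $F(\widehat{\beta_s}) = \widehat\beta$ and $F(\widehat{\beta'_s}) = \widehat{\beta'}$. By the Remark, pseudo link equivalence is generated by singular link equivalence together with PR1, so there is a finite chain of diagrams joining $\widehat{\beta_s}$ to $\widehat{\beta'_s}$ in which each step is either a move shared with singular link theory (R2, R3, PR2, PR3, PR3') or a single PR1-type move (creation or deletion of one singular kink). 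Along each maximal run between consecutive PR1 steps the intermediate diagrams are genuinely singular-equivalent, so after braiding them (Alexander's theorem for singular links, \cite{B1}) the singular Markov theorem realizes the passage by finitely many of the moves M1--M3. Each intervening PR1 step, once braided, is realized by the singular stabilization $\gamma \leftrightarrow \gamma\,\tau_n$ modulo M1--M3, and $\phi(\gamma\,\tau_n) = \phi(\gamma)\,p_n$ is precisely M4. Concatenating the runs produces a finite sequence of M1--M3 and $\tau_n$-stabilizations carrying $\beta_s$ to $\beta'_s$; applying $\phi$ transports it to a sequence of M1--M4 from $\beta$ to $\beta'$.

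The main obstacle I anticipate is the braiding step in the necessity direction: one must braid an entire sequence of diagram moves, not merely its endpoints, and verify that each elementary move (each shared Reidemeister move, and each PR1) is carried to one of M1--M4 with no further moves required. This is the technically delicate core of every Markov-type theorem; here it rests on invoking the singular Markov theorem (Gemein) as a black box for the shared moves and on the geometric identification of PR1 with the $p_n$-stabilization M4. A secondary point to check with care is that conjugation in M1 is restricted to the invertible subgroup $B_n$, with the non-invertible generators $p_i$ accommodated by the cyclic move M2, exactly as the monoid structure of $SM_n \cong PM_n$ forces.
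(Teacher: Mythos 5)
Your architecture coincides with the paper's: the easy direction is checked move by move (with M4 justified by PR1), and for the converse you reduce to a single elementary diagram move, split into moves shared with singular link theory---handled by Gemein's singular Markov theorem \cite{G} transported through the isomorphism $SM_n \cong PM_n$ of Proposition~\ref{con}---and the move PR1, which you identify with the stabilization M4. This matches the paper's Case (1)/Case (2) decomposition, including the compatibility of closures with the map $F$ and the decomposition of a pseudo link equivalence into singular-equivalence runs separated by PR1 steps.

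However, the one claim you do not prove is precisely the theorem's only new content; everything else is transport of Gemein's theorem through $\phi$. You write that each PR1 step, ``once braided, is realized by the singular stabilization $\gamma \leftrightarrow \gamma\,\tau_n$ modulo M1--M3,'' and later concede that this ``geometric identification of PR1 with the $p_n$-stabilization M4'' is the anticipated obstacle. But there is no result you can cite for this: $\gamma \leftrightarrow \gamma\tau_n$ is \emph{not} a Markov move for singular links (the closure of $\gamma\tau_n$ is a genuinely different singular link from $\widehat{\gamma}$---this is exactly why the case split is needed in the first place), so neither Gemein nor Birman supplies this step, and the monoid isomorphism $\phi$ transports only algebra, not relations between closures of singularly inequivalent diagrams. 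The paper fills exactly this hole with a geometric construction (its Case (2), Figure 7): place $L$ and $L'$ in $\mathbb{R}^2$ so that they coincide outside the small disk where PR1 occurs; take piecewise-linear representatives whose segments carry only up- or down-arrows, so that $L'$ has exactly one extra up-arrow; run Birman's braiding procedure (replacing each up-arrow by a pair of over-crossing down-strands) in parallel on both diagrams; and observe that the two resulting braids differ exactly by $\beta \leftrightarrow \beta p_n$. It also records the needed auxiliary fact that different placements or braidings of the same diagram are absorbed by M1--M3 via Case (1). Without an argument of this kind, your proposal reduces the theorem to its hardest step and stops there.
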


To prove this theorem, we will follow ideas of Birman \cite[Chapte 2]{B} who proved the Markov theorem for classical links and ideas of
Gemein \cite{G} who proved the Markov theorem for singular links. For the singular case, the Markov's moves
consist of the moves M1-M3. Birman's proof does not use Reidemeister moves but triangular moves. Gemein defines
triangular moves which can be applied to singular points. We need to modify his moves so they can be applied to pre-crossings.

\begin{proof}
It is straight forward to check that if $\beta$ and $\beta'$ are different by one of the Markov moves M1-M4, their closures are equivalent pseudo link diagrams.

To prove the converse, it is sufficient to show that if $L'$ and $L$ differ by a generalized Reidemeister move for pseudo link diagrams, then $\beta'$ can be transformed to $\beta$ through Markov moves and pseudo braid isotopies.
Denote by $l'=f^{-1}(L')$ and $l=f^{-1}(L)$, i.e, $l', l$ are obtained from $L', L$ by replacing pre-crossings by singular crossings. By the Alexander's Theorem, there exist singular braids $\alpha', \alpha$ such that $\widehat{\alpha'}=l'$ and $\widehat{\alpha}=l$. Note that we may assume that $\alpha, \alpha'$ are obtained from $\beta, \beta'$ by replacing pre-crossings by singular crossings.  There are two cases to settle the proof:
\begin{enumerate}
\item If $L, L'$ differ by a pseudo Reidemeister move induced by a singular Reidemeister move, then $l, l'$ differ by a singular Reidemeister move.  Then by Gemein's Markov theorem for singular links \cite{G}, the singular braids $\alpha'$ and $\alpha$ are different by singular braid isotopies and the singular version of Markov moves M1-M3. Then obviously, $\beta$ and $\beta'$ differ by pseudo braid isotopies and Markov moves M1-M3.
\item If $L, L'$ differ by the first pseudo Reidemeister move (removing a pre-crossing), then we cannot use the singular moves because $l, l'$ are no longer isotopic. Note that if a link diagram placed in $\mathbb{R}^2$ differently, say differ by a rotation, then their corresponding pseudo braids are different by pseudo braid isotopies and Markov moves M1-M3 again similar to the argument of Case (1) using Gemein's theorem. So we place $L, L'$ in $\mathbb{R}^2$ in such a way that all other parts of the two diagrams are identical except for the local pieces 1 and 2 in Figure 7.
\begin{center}
\includegraphics[width=10.0cm]{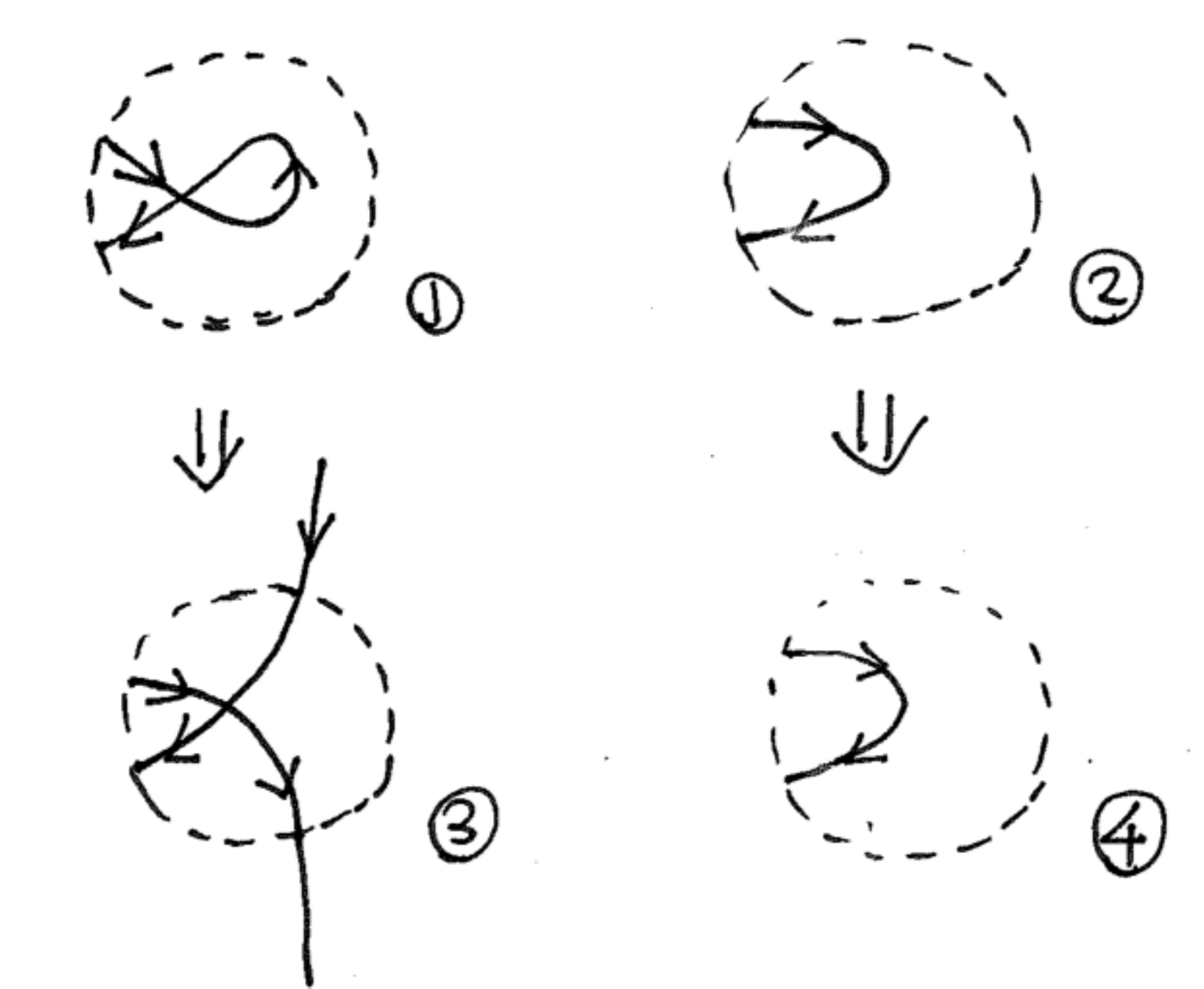}\\
{Figure 7: Braiding of the first pseudo Reidemeister move}
\end{center}
Without loss of generality, we assume $L, L'$ are made up of piece of segments so that when we orient the links by labeling for each piece of the diagram an arrow, the arrow is either up or down (no horizontal segments). After we assigns arrows to each piece of the links, we notice that there is one extra piece of up-arrow in $L'$ (see Figure 7).
Now we braid the two pseudo knot diagrams by keeping the down-arrows but stretching a pair of down arrows (a braid) to replace all the up-arrows. This new pair of braid has over-crossings to other part of the diagram. We first braid $L$ and $L'$ at the corresponding up-arrow pieces in part of the diagram where they are identical and in the end  we braid the local pieces 1 and 2 in Figure 7 into 3 and 4. In this way, the two pseudo braids $\beta$ and $\beta'$ we obtained differ exactly by the Markov move M4.
\end{enumerate}
As diagrams for the same pseudo link differ by a finite sequence of either first pseudo Reidemeister move or other pseudo Reidemeister moves induced by the singular Reidemeister moves, their corresponding braids are different by a finite sequence of Markov moves M1-M4.
\end{proof}

%
%
%
%
%
%
%
%
%
%

\end{document}